\documentclass[12pt]{amsart}  


%
\usepackage{graphicx}
\usepackage{here} 
\usepackage{array} 


\usepackage{vmargin}
\usepackage{amssymb}
\usepackage{mathrsfs}
\usepackage[all]{xy}
\usepackage[usenames,dvipsnames]{color}
\usepackage{soul}
\RequirePackage[colorlinks,linkcolor=blue,citecolor=LimeGreen,urlcolor=red]{hyperref} 
\usepackage{amsmath}






\newtheorem{theorem}{Theorem}[section]

\newtheorem{lemma}[theorem]{Lemma}

\newtheorem{prop}[theorem]{Proposition}
\newtheorem{remark}[theorem]{Remark}

\numberwithin{equation}{section}


\newcommand{\R}{\mathbb{R}}

\newcommand{\N}{\mathbb{N}}
\newcommand{\Z}{\mathbb{Z}}

\newcommand{\T}{\mathbb{T}}


\newcommand{\func}[3]{#1 : #2 \longrightarrow #3}

\newcommand{\abs}[1]{\left|#1\right|}
\newcommand{\eps}{\varepsilon}
\newcommand{\norm}[1]{\left\|#1\right\|}

\renewcommand{\leq}{\leqslant}
\renewcommand{\geq}{\geqslant}
\renewcommand{\bar}{\overline}

\newcommand{\pa}[1]{\left(#1\right)}
\newcommand{\cro}[1]{\left[#1\right]}
\newcommand{\br}[1]{\left\{#1\right\}}
\newcommand\restr[2]{{
  \left.\kern-\nulldelimiterspace 
  #1 
  \right|_{ #2} 
  }}


\setcounter{tocdepth}{1} 	



\def\signmb{\bigskip \begin{center} {\sc
Marc Briant\par\vspace{3mm}
Laboratoire MAP5, Universit\'e Paris Descartes\par
45, rue des Saint-Pères,\par
75006, Paris, France\par
\vspace{3mm}
e-mail:} \tt{briant.maths@gmail.com} \end{center}}

\def\signjf{\bigskip \begin{center} {\sc
Julie Fournier\par\vspace{3mm}
Laboratoire MAP5, Universit\'e Paris Descartes\par
45, rue des Saint-Pères,\par
75006, Paris, France\par
\vspace{3mm}
e-mail:} \tt{julie.fournier@parisdescartes.fr} \end{center}}

\begin{document} 

\title[Isotropic diffeomorphisms for deformed random fields]{Isotropic diffeomorphisms: solutions to a differential system for a deformed random fields study}
\author{Marc Briant}
\author{Julie Fournier}

\begin{abstract}
This Note presents the resolution of a differential system on the plane that translates a geometrical problem about isotropic deformations of area and length. The system stems from a probability study on deformed random fields \cite{F17}, which are the composition of a random field with invariance properties defined on the plane with a deterministic diffeomorphism. The explicit resolution of the differential system allows to prove that a weak notion of isotropy of the deformed field, linked to its excursion sets, in fact coincides with the strong notion of isotropy. The present Note first introduces the probability framework that gave rise to the geometrical issue and then proposes its resolution.

\end{abstract}

\maketitle

\vspace*{10mm}


%

\tableofcontents

The motivation for the result featured in the present article originates from a probability problem about deformed random fields. Indeed, one of the main results of \cite{F17} needed a complete characterization of isotropic deformed fields and it turned out such a description was given by solutions to a system of nonlinear partial differential equations. The resolution of that system is a major step in the proof, yet it is completely independent. Moreover, its analytical flavour as well as the geometric classification it contains makes it interesting on its own and out of step with the probability nature of \cite{F17}.
\par Geometrically, the aim is to investigate the class of planar transformations $\func{F}{\R^2}{\R^2}$ that are $C^2$ and transform isotropically areas of rectangles and lengths of segments: 
\begin{equation}\label{eq:geometric}
\forall \varphi \in SO(2),\:\forall i \in \br{1,2}, \quad l_i(F\circ \varphi (E)) = l_i(F(E))
\end{equation}
where $l_i$ stands for the Lebesgue measure in $\R^1$ or $\R^2$ depending on $E$ being a segment (embedded in $\R^1$) or a rectangle (viewed as a surface embedded in $\R^2$). Such a property boils down to the fact that both the norms of each of the columns of the cartesian Jacobian matrix of the polar form  of $F$ and its determinant are radial: introducing $\T=\R/2\pi\Z$ the one-dimensional torus 
\begin{equation}\label{eq:jacobian}
\begin{split}
\forall (r,\theta) \in \R^+\times \T, \quad &\norm{\mbox{Jac}_F(r,\theta)\cro{\begin{array}{c} 1 \\ 0 \end{array}}} = g(r),\quad \norm{\mbox{Jac}_F(r,\theta)\cro{\begin{array}{c} 0 \\ 1 \end{array}} }= h(r)
\\&\mbox{det}\pa{\mbox{Jac}_F}(r,\theta) = f(r),
\end{split}
\end{equation}
where $f$, $g$ and $h$ are $C^2(\R^{+*},\R)$.
\par In this Note we prove that the class $(\ref{eq:geometric})$ is exactly the family of ``spiral deformations'' described in polar coordinates by:
\begin{equation}\label{eq:spiral}
\forall (r,\theta) \in \R^+\times \T,\quad R(r,\theta) = \mathcal{R}(r) \quad\mbox{and}\quad \Theta(r,\theta)=\pm\theta + \bar{\Theta}(r).
\end{equation}

The main goal of the present Note is therefore to introduce the background and one of the main results of \cite{F17} in order to motivate the analytic problem and then to solve it.

\section{Characterization of isotropy in deformed random \textit{via}  excursion sets}

All the random fields mentioned in this introduction are defined on $\R^2$, take real values and we furthermore assume that they are Gaussian.
\par A deformed random field is constructed with a regular, stationary and isotropic random field $X$ composed with a deterministic diffeomorphism $F$ such that $F(0)=0$. The result of this composition is a random field $X\circ F$.
Stationarity, respectively isotropy (refered to in the following as strong isotropy), consists in an invariance of the law of a random field under translations, respectively under rotations in $\R^2$. Even though the underlying field $X$ is isotropic, the deformed random fields constructed with $X$ are generally not. It is however possible to characterize explicitely a diffeomorphism $F$ such that for any underlying field $X$, the deformed field $X\circ F$ is strongly isotropic. Such diffeomorphisms are exactly the spiral diffeomorphisms introduced above $(\ref{eq:spiral})$.
\par The objective in \cite{F17} is to study a deformed field using sparse information, that is, the information provided by excursion sets of the field over some basic subsets in $\R^2$. If a real number $u$ is fixed, the excursion set of the field $X\circ F$ above level $u$ over a compact set $T$ is the random set
\[A_u(X\circ F,T):=\{t\in T\,/\, X(F(t))\geq u\}.
\]
One useful functional to study the topology of sets is the Euler characteristic, denoted by $\chi$. Heuristically, the Euler characteristic of a one-dimensional compact regular set is simply the number of intervals in this set; the Euler characteristic of a two-dimensional compact regular set is the number of connected components minus the number of holes in this set. 
\par A rotational invariance condition of the mean Euler characteristic of the excursion sets of $X\circ F$ over rectangles is then introduced as a weak isotropy property. More precisely, a random field $X\circ F$ is said to satisfy this weak isotropy property if for any real $u$, for any rectangle $T$ in $\R^2$ and for rotation
$\varphi$,
\begin{equation}  \label{invariancepropEuler}
\mathbb{E}[\chi(A_u(X\circ F, \varphi(T)))]=\mathbb{E}[\chi(A_u(X\circ F, T))].
\end{equation} 
The latter condition is in particular clearly true if the deformed field $X\circ F$ is strongly isotropic or, in other words, if $F$ is a spiral diffeomorphism. Provided that we add some assumptions on $X$, for any rectangle $T$ in $\R^2$, the expectation of $\chi(A_u(X\circ F,T))$ can be expressed as a linear combination of the area of the set $F(T)$ and the length of its frontier, with coefficients depending on $u$ only and not on the precise law of $X$.
\par Consequently, it occurs that Condition $(\ref{invariancepropEuler})$ is equivalent to Condition $(\ref{eq:geometric})$ and therefore to Condition $(\ref{eq:jacobian})$ on $F$ in the present Note. Theorem \ref{theo:expression functions} that we are going to demonstrate in this Note therefore implies that the spiral diffeomorphisms are the only solutions. This means that the associated deformed field is strongly isotropic, as explained before. As a result, the weak definition of isotropy coincides with the strong definition, as far as deformed fields are concerned.
\par From a practical point of view, a major consequence is that we only need information contained in the excursion sets of a deformed random field $X\circ F$ (more precisely, Condition $(\ref{invariancepropEuler})$ fulfilled) to decide the issue of isotropy.


\section{Planar deformations modifying lengths and areas isotropically}

\par We now turn to the study of the class of $C^2$ planar transformations satisfying $(\ref{eq:geometric})$. Using a polar representation for such $F$ we translate the rotational invariant property $(\ref{eq:jacobian})$ into the following system of non-linear partial differential equations.

\bigskip
\begin{theorem}\label{theo:expression functions}
Let two functions $\func{R}{\R^+ \times \T }{\R^+}$ and $\func{\Theta}{\R^+ \times \T }{\T }$ be continuous on $\R^{+} \times \T $ and $C^2$ in $\R^{+*} \times \T $ that satisfy: $R(\cdot,\cdot)$ is surjective and $R(0,\cdot)$ is a constant function. Let $f$, $g$ and $h$ be $C^1$ functions from $\R^{+*}$ to $\R$ such that $f$ does not vanish.
Then the following differential equalities hold
\begin{eqnarray}
\forall (r,\theta) \in \R^{+*}\times \T ,\quad f(r) &=& R\partial_r R \partial_\theta \Theta - R \partial_\theta R \partial_r \Theta  \label{eq:scalar product}
\\ g(r) &=& \pa{\partial_r R}^2 + \pa{R\partial_r\Theta}^2 \label{eq:norm partial r}
\\ h(r) &=& \pa{\partial_\theta R}^2 + \pa{R\partial_\theta\Theta}^2 \label{eq:norm partial theta}
\end{eqnarray}
if and only if there exist $\eps_1$ and $\eps_2$ in $\br{-1,1}$ and $\Theta_0$ in $\T$ such that
\begin{itemize}
\item[(i)] $h$ is strictly increasing and continuous on $\R^+$ with $h(0)=0$;
\item[(ii)] for all $r >0$, $f(r) = \eps_1 \frac{h'(r)}{2}$ and $g(r)h(r) \geq f^2(r)$;
\item[(iii)] the functions $R$ and $\Theta$ are given by
\end{itemize}
\begin{equation*}
\begin{split}
\forall (r,\theta) \in \R^+\times\T,\quad  &R(r,\theta) = \sqrt{h(r)}
\\& \Theta(r,\theta) = \eps_1 \theta + \Theta_0 + \eps_2\int_0^r\frac{\sqrt{h(r_*)g(r_*)-f^2(r_*)}}{h(r_*)}\:dr_*.
\end{split}
\end{equation*}
\end{theorem}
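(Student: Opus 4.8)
The ``if'' direction is a direct substitution, so the work lies in the ``only if'' direction; I would proceed in three stages: extract the algebraic consequences, prove the structural fact that $R$ is radial and $\Theta$ affine in $\theta$, and then read off $(i)$, the relation $f=\eps_1 h'/2$ and the integral formula. \emph{Algebraic preliminaries.} On $\R^{+*}\times\T$ one has $R>0$ (otherwise every term of $(\ref{eq:scalar product})$ carries the factor $R$ and $f$ vanishes), and likewise $g>0$ (else $\partial_rR=\partial_r\Theta=0$ on a whole circle, forcing $f=0$ there) and $h>0$. Applying Lagrange's identity to $v:=(\partial_rR,R\partial_r\Theta)$ and $w:=(\partial_\theta R,R\partial_\theta\Theta)$,
\begin{equation*}
\pa{R\partial_rR\,\partial_\theta\Theta-R\partial_\theta R\,\partial_r\Theta}^2+\pa{\partial_rR\,\partial_\theta R+R^2\partial_r\Theta\,\partial_\theta\Theta}^2=\pa{(\partial_rR)^2+(R\partial_r\Theta)^2}\pa{(\partial_\theta R)^2+(R\partial_\theta\Theta)^2},
\end{equation*}
i.e.\ $f^2+m^2=gh$ with $m:=\partial_rR\,\partial_\theta R+R^2\partial_r\Theta\,\partial_\theta\Theta=v\cdot w$. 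Hence $gh-f^2=m^2\ge0$, which is the inequality in $(ii)$, and, $m$ being continuous with a $\theta$-independent square, $m$ depends on $r$ alone.

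\emph{Key step: $\partial_\theta R\equiv0$ and $\Theta=\eps_1\theta+\bar\Theta(r)$.} The three identities say precisely that $|v|^2=g(r)$, $|w|^2=h(r)$, $v\wedge w=f(r)$ and $v\cdot w=m(r)$ are all independent of $\theta$; differentiating these in $\theta$ gives $\partial_\theta v=\lambda v^\perp$ and $\partial_\theta w=\lambda w^\perp$ with a \emph{common} scalar $\lambda$ (here $f\neq0$ is used), while equality of the mixed partials of $R$ and of $\Theta$ yields the single clean identity $\partial_r w-\partial_\theta v=\tfrac{f(r)}{R}(0,1)$. Pairing this against $w$ and against $w^\perp$, substituting $\partial_\theta v=\lambda v^\perp$ and the corresponding expansion of $\partial_r w$, and simplifying, one isolates $\partial_\theta R$ and finds it vanishes identically. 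The conceptual reason is that, since the map $F\colon(r,\theta)\mapsto R(\cos\Theta,\sin\Theta)$ has Jacobian proportional to $f\neq0$, it pulls the Euclidean metric back to $g(r)\,dr^2+2m(r)\,dr\,d\theta+h(r)\,d\theta^2$, a flat metric with $r$-dependent coefficients; after the $r$-dependent shear of $\theta$ and the radial reparametrisation $ds=(|f|/\sqrt h)\,dr$ it becomes $ds^2+h\,d\tilde\theta^2$ with $\sqrt h$ (viewed as a function of $s$) necessarily affine in $s$, so the pulled-back metric is a cone, and it equals the standard polar form of $\R^2$ exactly when the cone angle is $2\pi$ — which holds here because $F$ is a diffeomorphism (equivalently, $\theta\mapsto\Theta(r,\theta)$ is a homeomorphism of $\T$). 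Consequently $R=\sqrt{h(r)}$, so $\partial_\theta R\equiv0$, and $\Theta(r,\theta)=\eps_1\theta+\bar\Theta(r)$ with $\eps_1\in\br{-1,1}$.

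\emph{Conclusion of the ``only if'' part.} With $R=\sqrt{h(r)}>0$ and $\partial_\theta R\equiv0$, $(\ref{eq:norm partial theta})$ forces $(\partial_\theta\Theta)^2=h/R^2=1$, hence $\partial_\theta\Theta\equiv\eps_1$; then $(\ref{eq:scalar product})$ gives $f=R\,\partial_rR\,\partial_\theta\Theta=\eps_1(\sqrt h)'\sqrt h=\eps_1 h'/2$, which is $(ii)$. Since $h=R^2$ is continuous on $\R^+$ and $h'=2\eps_1 f$ never vanishes, $h$ is strictly monotone; as $h\ge0$ while surjectivity of $R$ forces its minimum $R(0,\cdot)^2=h(0)$ to be $0$, $h$ is strictly increasing with $h(0)=0$, which is $(i)$. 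Finally $(\ref{eq:norm partial r})$ reads $g=(\partial_rR)^2+R^2(\partial_r\Theta)^2=\tfrac{h'^2}{4h}+h(\bar\Theta')^2=\tfrac{f^2}{h}+h(\bar\Theta')^2$, hence $(\bar\Theta')^2=(gh-f^2)/h^2$; $\bar\Theta'$ being continuous, $\bar\Theta'=\eps_2\sqrt{gh-f^2}/h$ for a fixed $\eps_2\in\br{-1,1}$, and integrating with $\Theta_0:=\lim_{r\to0^+}\bar\Theta(r)$ gives the stated formula.

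\emph{Converse and main obstacle.} Assuming $(i)$–$(iii)$, one has $\partial_rR=h'/(2\sqrt h)$, $\partial_\theta R=0$, $\partial_r\Theta=\eps_2\sqrt{gh-f^2}/h$, $\partial_\theta\Theta=\eps_1$ and $f^2=h'^2/4$, and substituting into $(\ref{eq:scalar product})$–$(\ref{eq:norm partial theta})$ the right-hand sides collapse to $f$, $g$, $h$ respectively. The genuine difficulty is the key step $\partial_\theta R\equiv0$: the computational route needs a somewhat delicate combination of the three differentiated equations, and the conceptual route needs the injectivity of the deformation to pin the cone angle — that is, to rule out the higher-winding solutions $R=\sqrt h/n$, $\Theta=\eps_1 n\theta+\bar\Theta(r)$ ($n\in\N^*$), which also satisfy $(\ref{eq:scalar product})$–$(\ref{eq:norm partial theta})$.
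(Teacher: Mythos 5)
The easy parts of your proposal --- the ``if'' direction, the algebraic preliminaries via Lagrange's identity, and the endgame once $R$ is known to be radial --- are correct and essentially match the paper. The problem is that your ``key step'', the only place where anything nontrivial happens, is not proved, and the computational route you sketch for it cannot be completed. The identities $\partial_\theta v=\lambda v^\perp$, $\partial_\theta w=\lambda w^\perp$ and $\partial_r w-\partial_\theta v=\tfrac{f}{R}(0,1)$ are all correct, but they are purely local consequences of $(\ref{eq:scalar product})$--$(\ref{eq:norm partial theta})$, and there exist local solutions of the full system with $f,g,h$ radial and $\partial_\theta R\not\equiv 0$. Concretely, take the translated polar map $F(r,\theta)=(r\cos\theta+c,\,r\sin\theta)$ with $c>0$: on $(0,c)\times\T$ its polar representation $R=\sqrt{r^2+2cr\cos\theta+c^2}$, $\Theta=\arg F$, is $C^2$ and $2\pi$-periodic, satisfies the three equations with $g=1$, $h=r^2$, $f=r$, and has $\partial_\theta R=-cr\sin\theta/R\neq0$. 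Hence no algebraic combination of the differentiated pointwise identities can ``isolate $\partial_\theta R$ and find it vanishes identically''. What kills such examples is global information: surjectivity of $R$, constancy of $R(0,\cdot)$ and $f\neq0$ force $R(0,\cdot)\equiv0$ and $R>0$ for $r>0$, and rotational invariance must be propagated down to the circle $r=0$. This is exactly what the paper's Proposition \ref{prop:isotrope R} does (tracking $\min_\theta R$ and $\max_\theta R$ via the inverse function theorem, showing their difference is locally constant in $r$, and iterating down to $r=0$ where it vanishes); nothing of that sort appears in your argument.

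Your ``conceptual route'' is closer in spirit but still has two gaps. First, even granting that the pullback metric is the flat cone of angle $2\pi$, the conclusion is only that $F$ agrees with a spiral map up to post-composition with a Euclidean isometry of the target; $R=|F|$ is radial only if that isometry fixes the origin (the translated polar map above is precisely the case where it does not), so the analysis at $r=0$ is again unavoidable. Second, you invoke ``$F$ is a diffeomorphism'' to fix the cone angle, i.e.\ the degree of $\theta\mapsto\Theta(r,\theta)$, but that is not among the hypotheses of the theorem as stated. Your closing observation that the higher-winding pairs $\bigl(\sqrt{h}/n,\;\eps_1 n\theta+\bar\Theta(r)\bigr)$ also solve the system is a legitimate remark about the statement (the paper's own last step, which restricts $\sqrt{h}/\mathcal{R}$ to $\{-1,0,1\}$, silently discards them), but in your write-up it serves to import an extra assumption rather than to close the argument.
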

\bigskip

Of important note is the fact that the assumptions made on $R$ and $\Theta$ before the differential system are here to ensure that they indeed describe a polar representant of a planar deformation $F$. Also note that the solutions obtained above are indeed spiral deformations $(\ref{eq:spiral})$.
\par The rest of this section is devoted to the proof of the theorem above. We first find an equivalent version of $(\ref{eq:scalar product})-(\ref{eq:norm partial r})-(\ref{eq:norm partial theta})$ that is not quadratic. Second, we prove that this equivalent problem can be seen as a specific case of a hyperbolic system of equations solely constraint to $(\ref{eq:norm partial theta})$. Finally we show that these two constraints necessarily imply Theorem \ref{theo:expression functions}.


\subsection{A non quadratic equivalent}

Here we prove the following proposition that gives the shape of the derivatives of $R$ and $\Theta$.

\bigskip
\begin{prop}\label{prop:equivalent system}
Let $R$, $\Theta$, $f$, $g$ and $h$ be functions as described by Theorem \ref{theo:expression functions}. Then, they satisfy the system $(\ref{eq:scalar product})-(\ref{eq:norm partial r})-(\ref{eq:norm partial theta})$ if and only if there exist $p$ in $\N$ and a continuous function $\func{\Phi}{\R^+ \times \T }{\T }$ such that
\begin{equation}\label{eq:equivalent system}
\begin{split}
\forall (r,\theta) \in \R^+\times \T , \quad \partial_r R &= \sqrt{g(r)}\cos \pa{\Phi(r,\theta)}
\\R\partial_r\Theta &= \sqrt{g(r)}\sin \pa{\Phi(r,\theta)}
\\ \partial_\theta R &= (-1)^p\sqrt{\frac{gh - f^2}{g}}(r)\cos \pa{\Phi(r,\theta)} - \frac{f}{\sqrt{g}}(r)\sin \pa{\Phi(r,\theta)}
\\ R\partial_\theta \Theta &= (-1)^p\sqrt{\frac{gh - f^2}{g}}(r)\sin \pa{\Phi(r,\theta)} + \frac{f}{\sqrt{g}}(r)\cos \pa{\Phi(r,\theta)}.
\end{split}
\end{equation}
\end{prop}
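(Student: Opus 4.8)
\emph{Strategy and the easy direction.} Write $u:=\pa{\partial_r R,\ R\,\partial_r\Theta}$ and $v:=\pa{\partial_\theta R,\ R\,\partial_\theta\Theta}$ for the two ``columns'' appearing in the system. Since $u_1v_2-u_2v_1=R\pa{\partial_r R\,\partial_\theta\Theta-\partial_\theta R\,\partial_r\Theta}$, the system $(\ref{eq:scalar product})$--$(\ref{eq:norm partial theta})$ is exactly
\[
\norm{u}^2=g(r),\qquad \norm{v}^2=h(r),\qquad \det(u,v)=f(r)\qquad(r>0),
\]
with $\det(u,v):=u_1v_2-u_2v_1$. In this language the reverse implication of the proposition is a one‑line verification: inserting the four formulas of $(\ref{eq:equivalent system})$ and using $\cos^2\Phi+\sin^2\Phi=1$ gives back $\norm{u}^2=g$ and $\norm{v}^2=h$, while the cross terms in $\det(u,v)$ cancel and leave $\sqrt g\cdot\tfrac{f}{\sqrt g}=f$; this uses only that $g>0$ and $gh\geq f^2$, which holds automatically as soon as the right‑hand sides of $(\ref{eq:equivalent system})$ are meaningful.

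\emph{The hard direction --- constructing $\Phi$.} Assume $(\ref{eq:scalar product})$--$(\ref{eq:norm partial theta})$. Lagrange's identity $\norm{u}^2\norm{v}^2=\langle u,v\rangle^2+\det(u,v)^2$ gives $\langle u,v\rangle^2=g(r)h(r)-f(r)^2\geq0$; since $f$ never vanishes, $gh>0$, hence $g>0$ and $h>0$ on $\R^{+*}$ (both are sums of squares), so $\sqrt g$, $\sqrt h$, $f/\sqrt g$ and $\sqrt{(gh-f^2)/g}$ are all well defined there. As $\norm{u}=\sqrt{g(r)}>0$, the map $(r,\theta)\mapsto u(r,\theta)/\sqrt{g(r)}$ is continuous with values in $\S^1$; composing it with the canonical homeomorphism $\S^1\to\T$ (a unit vector to its argument) yields a continuous $\func{\Phi}{\R^{+*}\times\T}{\T}$ with $\partial_r R=\sqrt g\cos\Phi$ and $R\,\partial_r\Theta=\sqrt g\sin\Phi$, i.e. the first two lines of $(\ref{eq:equivalent system})$. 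The remaining identities are statements on $\R^{+*}\times\T$, where every term is defined; the continuous extension of $\Phi$ to $\br{0}\times\T$ is then provided by the continuity of $R,\Theta$ up to $r=0$ together with $R(0,\cdot)$ being constant.

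\emph{The second column.} Decompose $v$ in the moving orthonormal frame $e_1:=\pa{\cos\Phi,\sin\Phi}=u/\sqrt g$, $e_2:=\pa{-\sin\Phi,\cos\Phi}$. One computes $\langle v,e_2\rangle=\det(e_1,v)=\det(u,v)/\sqrt g=f/\sqrt g$ and $\langle v,e_1\rangle=\langle u,v\rangle/\sqrt g$, whence
\[
v=\frac{\langle u,v\rangle}{\sqrt g}\,e_1+\frac{f}{\sqrt g}\,e_2 ,
\]
which, once $e_1,e_2$ are expanded, is precisely the last two lines of $(\ref{eq:equivalent system})$ with $(-1)^p\sqrt{(gh-f^2)/g}$ replaced by $\langle u,v\rangle/\sqrt g$. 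Moreover $\langle u,v\rangle^2=gh-f^2$ depends on $r$ alone, so for fixed $r$ the continuous function $\theta\mapsto\langle u(r,\theta),v(r,\theta)\rangle$ takes its values in the at‑most‑two‑point set $\br{\pm\sqrt{(gh-f^2)(r)}}$ and is constant by connectedness of $\T$; call this value $\gamma(r)$. Then $\gamma$ is continuous on $\R^{+*}$ and $\gamma^2=gh-f^2$.

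\emph{The main obstacle.} It remains to upgrade the pointwise relation $\gamma(r)=\pm\sqrt{(gh-f^2)(r)}$ into a \emph{uniform} one, namely to produce a single $p\in\N$ with $\gamma=(-1)^p\sqrt{gh-f^2}$ throughout $\R^{+*}$; this is the step I expect to require the most care. On $\br{r>0:(gh-f^2)(r)>0}$ the sign of the continuous function $\gamma$ is locally constant, whereas on $\br{r>0:(gh-f^2)(r)=0}$ one has $\gamma=0$ and $(\ref{eq:equivalent system})$ holds there with either sign, so the real point is the coherence of these local signs across the zero set of $gh-f^2$ --- which is where one must exploit the connectedness of $\R^{+*}$ and the standing hypotheses on $R$ and $\Theta$ ($C^2$ on $\R^{+*}\times\T$, continuous on $\R^+\times\T$, $R$ surjective with $R(0,\cdot)$ constant, $f$ nowhere vanishing). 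Once such a $p$ is fixed, substituting $\gamma=(-1)^p\sqrt{gh-f^2}$ into the displayed expression for $v$ gives exactly the last two lines of $(\ref{eq:equivalent system})$, which finishes the proof. An equivalent and perhaps lighter bookkeeping is to instead write $v=\sqrt h\,(\cos\Psi,\sin\Psi)$ for a continuous $\Psi$ and to follow the single continuous function $\Psi-\Phi$, whose sine equals $f/\sqrt{gh}$; the sign ambiguity then becomes the choice of a branch of $\arcsin$, to be fixed once and for all by the same connectedness argument.
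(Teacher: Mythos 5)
Your argument is, up to notation, the same as the paper's: your frame $e_1=u/\sqrt g$, $e_2$ and the coefficients $\langle u,v\rangle/\sqrt g$, $f/\sqrt g$ are exactly the paper's unit complex numbers $W_1=Z_1/\sqrt{g(r)}$ and its Gram--Schmidt companion $W_2$, and the easy direction, the positivity of $g$ and $h$, the construction of $\Phi$ from $u/\sqrt g$, and the identity $\gamma^2=gh-f^2$ with $\gamma$ independent of $\theta$ are all correct (the continuity of $\Phi$ at $r=0$ is glossed over, but the paper does the same).

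The genuine gap is the step you explicitly defer: producing one global sign, i.e. $\gamma=(-1)^p\sqrt{gh-f^2}$ on all of $\R^{+*}$. This is not bookkeeping; it is precisely the point where the paper does something you do not. Before introducing $\Phi$, the paper proves the pointwise strict inequality $(\ref{inequality fgh})$, $f(r)^2<g(r)h(r)$, via the sum-of-squares identity $g+h\pm 2f=\pa{\partial_r R\pm R\partial_\theta\Theta}^2+\pa{\partial_\theta R\mp R\partial_r\Theta}^2\geq 0$; once $gh-f^2$ never vanishes, your $\gamma$ is continuous and nonvanishing on the connected set $\R^{+*}$ and the constant sign follows immediately. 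Without some such nonvanishing input, no appeal to connectedness and the standing hypotheses can close the gap, because the three scalar equations do admit solutions in which $\gamma$ changes sign across a zero of $gh-f^2$: take $R(r,\theta)=r$ and $\Theta(r,\theta)=\theta+r^2/2-r$; then $(\ref{eq:scalar product})$--$(\ref{eq:norm partial theta})$ hold with $f(r)=r$, $g(r)=1+r^2(r-1)^2$, $h(r)=r^2$ (all hypotheses of Theorem \ref{theo:expression functions} are met), while $\gamma(r)=r^2(r-1)$ changes sign at $r=1$, so no single $p$ can serve on both sides of $r=1$. So your proof is incomplete exactly where you suspected: you must either establish that $gh-f^2$ cannot vanish (the paper's route through $(\ref{inequality fgh})$ --- whose own derivation from $2\abs f\leq g+h$ the example above suggests should be examined with care) or accept that the uniform sign cannot be obtained by a soft argument.
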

\bigskip

\begin{proof}[Proof of Proposition \ref{prop:equivalent system}]
First, functions satisfying $(\ref{eq:equivalent system})$ are solutions to our original system $(\ref{eq:scalar product})-(\ref{eq:norm partial r})-(\ref{eq:norm partial theta})$ by mere computation.
\par Now assume that the functions are solutions of $(\ref{eq:scalar product})-(\ref{eq:norm partial r})-(\ref{eq:norm partial theta})$. The key is to see the quantities involved as complex numbers functions: $Z_1 = \partial_r R + i R\partial_r\Theta$ and $Z_2(r,\theta) = R\partial_\theta\Theta - i \partial_\theta R$. Then the system $(\ref{eq:scalar product})-(\ref{eq:norm partial r})-(\ref{eq:norm partial theta})$ translates into
\begin{equation*}
\begin{split}
\forall (r,\theta)\in \R^+\times \T , \quad &\abs{Z_1(r,\theta)}^2 = g(r)
\quad\mbox{and}\quad \abs{Z_2(r,\theta)}^2 = h(r) 
\\&\mbox{Re}\pa{ Z_1(r,\theta)\bar{Z_2(r,\theta)}} = f(r).
\end{split}
\end{equation*}
We first note that $f(r)^2 \leq g(r)h(r)$ and recalling that $f$ never vanishes on $\R^+$ it follows that neither $g$ nor $h$ can be null on $R^+$. Therefore since
$$g+h\pm 2f = \pa{\partial_r R \pm R\partial_\theta \Theta}^2 + \pa{\partial_\theta R \mp R\partial_r\Theta}^2 \geq 0$$
it follows $2\abs{f} \leq g(r)+h(r)$ and therefore we must in fact have
\begin{equation}\label{inequality fgh}
\forall r \in \R^{+}, \quad f(r)^2 < g(r)h(r).
\end{equation}
We can thus define the complex numbers
$$W_1(r,\theta) = \frac{Z_1(r,\theta)}{\sqrt{g(r)}},\quad W_2(r,\theta) = \sqrt{\frac{f(r)^2}{g(r)\pa{g(r)h(r)-f(r)^2}}}\cro{Z_1(r,\theta) - \frac{g(r)}{f(r)}Z_2(r,\theta)}.$$
which are of prime importance since they satisfy the following orthonormality property :
$$\forall (r,\theta)\in \R^+\times \T , \quad \abs{W_1(r,\theta)}^2  = \abs{W_2(r,\theta)}^2 = 1 \quad\mbox{and}\quad \mbox{Re}\pa{ W_1(r,\theta)\bar{W_2(r,\theta)}} = 0.$$
We deduce that there exist a continuous function $\func{\Phi}{\R^+ \times \T }{\T }$ and an integer $p \geq 0$ such that
$$\forall (r,\theta) \in \R^+\times \T , \quad W_1(r,\theta) = e^{i\Phi (r,\theta)} \quad\mbox{and}\quad W_2(r,\theta) = e^{-i\Phi (r,\theta) + (2p +1)\frac{\pi}{2}}.$$
Coming back to the original $Z_1, Z_2$ and then to $R$ and $\Theta$ concludes the proof.
\end{proof}


\subsection{A hyperbolic system under constraint}

We now find a more general system of equations satisfied by the functions we are looking for as well as a restrictive property that defines them.

\bigskip
\begin{lemma}\label{lem:hyperbolic system}
Let $R$, $\Theta$, $f$, $g$ and $h$ be functions as described by Theorem \ref{theo:expression functions}. Then, they satisfy the system $(\ref{eq:scalar product})-(\ref{eq:norm partial r})-(\ref{eq:norm partial theta})$ if and only if they satisfy $(\ref{eq:norm partial theta})$ and there exist $\func{\alpha, \beta}{\R^+}{\R}$ continuous with $\beta(r) >0$ such that
\begin{equation}\label{eq:hyperbolic}
\begin{split}
\forall (r,\theta) \in \R^{+*}\times \T , \quad \partial_\theta R &= \alpha(r) \partial_r R -\beta(r) R \partial_r \Theta
\\R \partial_\theta \Theta &= \alpha(r) R \partial_r \Theta +\beta(r) \partial_r R.
\end{split}
\end{equation}
\end{lemma}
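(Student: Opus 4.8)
The plan is to read off $(\ref{eq:hyperbolic})$ directly from Proposition $\ref{prop:equivalent system}$ for the forward implication, and to recover $(\ref{eq:scalar product})$ and $(\ref{eq:norm partial r})$ from $(\ref{eq:norm partial theta})$ and $(\ref{eq:hyperbolic})$ by a short algebraic computation for the converse. All the genuine content --- the orthonormal-frame structure behind the system --- is already packaged in Proposition $\ref{prop:equivalent system}$, so what remains here is essentially bookkeeping; the only points requiring a little care are the sign of $\beta$ and the behaviour of $\alpha$ and $\beta$ near $r=0$.

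For the forward implication, assume $(\ref{eq:scalar product})$--$(\ref{eq:norm partial r})$--$(\ref{eq:norm partial theta})$. Then $(\ref{eq:norm partial theta})$ holds, and Proposition $\ref{prop:equivalent system}$ provides $p\in\N$ and a continuous $\func{\Phi}{\R^+\times\T}{\T}$ satisfying $(\ref{eq:equivalent system})$. As observed in the proof of that proposition, $g$ does not vanish on $\R^{+*}$, and since $(\ref{eq:norm partial r})$ exhibits $g$ as a sum of squares it is in fact $>0$ there; hence the first two lines of $(\ref{eq:equivalent system})$ read $\cos\Phi=\partial_r R/\sqrt{g}$ and $\sin\Phi=R\partial_r\Theta/\sqrt{g}$. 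Inserting these into the last two lines of $(\ref{eq:equivalent system})$ gives, for all $(r,\theta)\in\R^{+*}\times\T$,
\[
\partial_\theta R=\alpha(r)\,\partial_r R-\beta(r)\,R\partial_r\Theta,\qquad R\partial_\theta\Theta=\alpha(r)\,R\partial_r\Theta+\beta(r)\,\partial_r R,
\]
with $\alpha(r)=(-1)^p\sqrt{g(r)h(r)-f(r)^2}\,/\,g(r)$ and $\beta(r)=f(r)/g(r)$; these are continuous on $\R^{+*}$ by $(\ref{inequality fgh})$ and $g>0$, and extend continuously to $r=0$ using the hypotheses on $R$ and $\Theta$ at the origin. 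Since $f$ is continuous and nowhere zero, $\beta$ has a fixed sign, which is positive under the orientation normalization $f>0$; this is $(\ref{eq:hyperbolic})$.

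For the converse, assume $(\ref{eq:norm partial theta})$ and that continuous $\alpha,\beta$ with $\beta>0$ satisfy $(\ref{eq:hyperbolic})$; this direction does not use Proposition $\ref{prop:equivalent system}$. Squaring and summing the two identities in $(\ref{eq:hyperbolic})$, the cross terms cancel and
\[
\pa{\partial_\theta R}^2+\pa{R\partial_\theta\Theta}^2=\pa{\alpha^2+\beta^2}\cro{\pa{\partial_r R}^2+\pa{R\partial_r\Theta}^2};
\]
since $\alpha^2+\beta^2\geq\beta^2>0$, comparing with $(\ref{eq:norm partial theta})$ shows that $\pa{\partial_r R}^2+\pa{R\partial_r\Theta}^2=h/(\alpha^2+\beta^2)$ depends only on $r$, which is $(\ref{eq:norm partial r})$ with $g:=h/(\alpha^2+\beta^2)$. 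Likewise, using $(\ref{eq:hyperbolic})$ to write $\partial_\theta\Theta=\alpha\,\partial_r\Theta+\beta\,\partial_r R/R$ and substituting it, together with the expression for $\partial_\theta R$, into $R\partial_r R\,\partial_\theta\Theta-R\partial_\theta R\,\partial_r\Theta$, the $\alpha$-terms cancel and one is left with $\beta\cro{\pa{\partial_r R}^2+\pa{R\partial_r\Theta}^2}$, again a function of $r$ alone: this is $(\ref{eq:scalar product})$, and the resulting $f$ is $C^1$ (because $R,\Theta\in C^2$) and nowhere zero (because $\beta$ and $g$ are not). Together with the assumed $(\ref{eq:norm partial theta})$ this recovers the full system, so the equivalence holds.

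The only step relying on a nontrivial earlier result is the forward implication, through Proposition $\ref{prop:equivalent system}$; the converse is pure computation. I therefore expect the genuine difficulty to have been absorbed into that proposition, the residual obstacles being merely the normalization of the sign of $\beta$ and the continuous extension of $\alpha$ and $\beta$ to $r=0$.
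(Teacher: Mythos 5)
Your proof is correct and takes essentially the same route as the paper: the forward direction reads $(\ref{eq:hyperbolic})$ off Proposition \ref{prop:equivalent system} with $\alpha(r)=(-1)^p\sqrt{g(r)h(r)-f^2(r)}/g(r)$ and $\beta(r)=f(r)/g(r)$, and the converse is the same direct computation, defining $g=h/(\alpha^2+\beta^2)$ and $f=\beta g$. Even the two points you single out (the sign of $\beta$, taken care of by the orientation convention $f>0$, and the extension of $\alpha,\beta$ to $r=0$) are treated no less carefully than in the paper, which leaves them implicit.
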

\bigskip

\begin{remark}
Even if this set of equations still seems non-linear, it actually is linear in $X=(\mbox{ln}(R), \Theta)$. It indeed satisfies a vectorial transport equation $\partial_\theta X + A(r)\partial_r X =0$ with $A(r)$ being skew-symmetric and invertible. This equation is however non trivial as even in the case $\alpha(r) = 0$ we are left to solve $\partial_\theta \cro{\mbox{ln}(R)} = -\beta(r) \partial_\theta \Theta$ and $\partial_\theta \Theta = \beta(r) \partial_\theta \cro{\mbox{ln}(R)}$. And so $\mbox{ln}(R)$ and $\Theta$ are both solutions to $\partial_{\theta\theta} f + \beta(r)^2\partial_{rr}f =0$. For more on this subject we refer the reader to \cite{Serre99}.
\end{remark}
\bigskip

\begin{proof}[Proof of Lemma \ref{lem:hyperbolic system}]
The necessary condition follows directly from the set of equations $(\ref{eq:equivalent system})$ given in Proposition \ref{prop:equivalent system}, denoting $\alpha(r) =(-1)^p\frac{\sqrt{gh - f^2}}{g}(r)$ and $\beta(r) = f(r)/g(r)$ and dividing by $R >0$.
\par The sufficient condition follows by direct computation from $(\ref{eq:hyperbolic})$ and $(\ref{eq:norm partial theta})$, defining $g(r) = \frac{h(r)}{\alpha^2(r) + \beta^2(r)}$ and $f(r) = \beta(r)g(r) >0$.
\end{proof}
\bigskip

We now show that solutions to the hyperbolic system that are constraint by $(\ref{eq:norm partial theta})$ must satisfy that $R$ is radially symmetric.

\bigskip
\begin{prop}\label{prop:isotrope R}
Let $R,\:\Theta$ be solution to $(\ref{eq:hyperbolic})$ with $R$ and $\Theta$ verifying the assumptions of Theorem \ref{theo:expression functions}. Suppose that $(R,\Theta)$ also satisfies $(\ref{eq:norm partial theta})$; then $R$ is isotropic: for all $(r,\theta)$ in $\R^+\times \T $, $R(r,\theta) = \mathcal{R}(r)$ with moreover $\mathcal{R}(0)=0$ and  $\mathcal{R}'(r) >0$ for all $r>0$.
\end{prop}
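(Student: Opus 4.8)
The plan is to pass to the Cartesian complex description of the deformation and exploit the compatibility of mixed partial derivatives. First I would observe that $R>0$ on $\R^{+*}\times\T$: if $R(r_0,\theta_0)=0$ with $r_0>0$, then every term on the right-hand side of \eqref{eq:scalar product} carries a factor $R$ and thus vanishes at $(r_0,\theta_0)$, which is impossible since $f$ never vanishes. Because $R$ is surjective onto $\R^{+}$ it must take the value $0$ somewhere, and by the previous remark this can only happen on $\br{0}\times\T$; as $R(0,\cdot)$ is constant this forces $R(0,\cdot)\equiv 0$, hence $\mathcal R(0)=0$ once radiality is proved.

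By Lemma \ref{lem:hyperbolic system}, $(R,\Theta)$ actually solves the full system \eqref{eq:scalar product}--\eqref{eq:norm partial theta}, so Proposition \ref{prop:equivalent system} applies and supplies \eqref{eq:equivalent system} with a continuous $\Phi$ and an integer $p$; recall moreover that $f^2<gh$, hence $g,h>0$, on $\R^{+*}$ (shown in the proof of Proposition \ref{prop:equivalent system}). One also notes that $\Phi$ is in fact $C^1$, since $e^{i\Phi}=(\partial_r R+iR\partial_r\Theta)/\sqrt{g}$. Introducing the complex field $Z:=R\,e^{i\Theta}$, a direct computation from \eqref{eq:equivalent system} gives
\[
\partial_r Z=\sqrt{g(r)}\,e^{i(\Phi+\Theta)},\qquad \partial_\theta Z=\sqrt{h(r)}\,e^{i\gamma(r)}\,e^{i(\Phi+\Theta)},
\]
where $e^{i\gamma(r)}$ is the unit complex number proportional to $(-1)^p\sqrt{(gh-f^2)/g}+i\,f/\sqrt{g}$; in particular $\sin\gamma=f/\sqrt{gh}$ never vanishes.

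The heart of the argument is the Schwarz identity $\partial_\theta\partial_r Z=\partial_r\partial_\theta Z$. Writing it out, dividing by $e^{i(\Phi+\Theta)}$ and multiplying by $e^{-i\gamma}$, one obtains $i\sqrt{g}\,\partial_\theta(\Phi+\Theta)\,e^{-i\gamma}=(\sqrt h)'+i\sqrt h\,\gamma'+i\sqrt h\,\partial_r(\Phi+\Theta)$; splitting into real and imaginary parts and using $\sin\gamma\neq 0$ yields $\partial_\theta(\Phi+\Theta)=(\sqrt h)'/(\sqrt{g}\sin\gamma)$ and then an expression for $\partial_r(\Phi+\Theta)$, so both are functions of $r$ alone. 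Since $\Phi+\Theta$ is $\T$-valued and $\partial_\theta(\Phi+\Theta)$ does not depend on $\theta$, the degree of $\Phi(r,\cdot)+\Theta(r,\cdot)$ equals the integer $\partial_\theta(\Phi+\Theta)(r)$, which varies continuously with $r$ and is therefore a fixed constant $c\in\Z$; integrating gives $\Phi+\Theta=c\,\theta+k(r)$ for some function $k$. Then $\partial_r\pa{e^{-ic\theta}Z}=\sqrt{g(r)}\,e^{ik(r)}$ depends on $r$ only, so $e^{-ic\theta}\cro{Z(r,\theta)-Z(r_1,\theta)}=\int_{r_1}^{r}\sqrt{g}\,e^{ik}$ for $0<r_1<r$; letting $r_1\to 0$ (the limit exists by continuity of $Z$ up to the origin) gives $Z(r,\theta)=Z(0,\theta)+e^{ic\theta}K(r)$ with $K$ independent of $\theta$. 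As $Z(0,\theta)=R(0,\theta)e^{i\Theta(0,\theta)}=0$ by the first step, we conclude $Z(r,\theta)=e^{ic\theta}K(r)$, hence $R(r,\theta)=\abs{K(r)}=:\mathcal R(r)$ is radial. Finally $\partial_\theta R=0$ turns \eqref{eq:scalar product} into $f(r)=\mathcal R(r)\mathcal R'(r)\,\partial_\theta\Theta(r,\theta)$; since $f$ never vanishes, $\mathcal R'$ does not vanish on the connected set $\R^{+*}$, hence has constant sign there, and since $\mathcal R(0)=0<\mathcal R(r)$ for $r>0$ this sign is positive.

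The step I expect to be the main obstacle is the mixed-derivative computation: one must differentiate the two expressions for $\partial_r Z$ and $\partial_\theta Z$ with care and check that the $\theta$-dependence cancels exactly, leaving a relation that determines $\partial_\theta(\Phi+\Theta)$ and $\partial_r(\Phi+\Theta)$ as radial functions. The rest is bookkeeping: continuity together with $\T$-periodicity to force $c\in\Z$, and the passage $r_1\to0$, which only uses continuity of $R$ and $\Theta$ at the origin.
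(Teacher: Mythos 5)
Your proof is correct, and it takes a genuinely different route from the paper's. The paper argues entirely in real variables: it studies the critical points of $\theta\mapsto R(r,\theta)$, shows via $(\ref{eq:hyperbolic})$ and $(\ref{eq:norm partial theta})$ that $\pa{\partial_r R}^2$ takes the same value at every such critical point, tracks the global minimum and maximum by the inverse function theorem as $C^1$ curves $\phi_{m}(r),\phi_M(r)$, deduces that $\max_\theta R(r,\cdot)-\min_\theta R(r,\cdot)$ is locally constant in $r$, and then iterates down to $r=0$ (where $R$ vanishes) to conclude the difference is zero, finishing with a sup argument to cover all $r$. You instead pass to the Cartesian field $Z=Re^{i\Theta}$, use the factorization of Proposition \ref{prop:equivalent system} to write $\partial_r Z$ and $\partial_\theta Z$ as radial moduli times $e^{i(\Phi+\Theta)}$, and let Clairaut's theorem do the work: the identity $\partial_\theta\partial_r Z=\partial_r\partial_\theta Z$ combined with $\sin\gamma=f/\sqrt{gh}\neq 0$ forces $\partial_\theta(\Phi+\Theta)$ to be radial, the degree argument pins it to an integer constant $c$, and integrating $\partial_r\pa{e^{-ic\theta}Z}$ from the origin (where $Z$ vanishes, by the same first step as the paper) yields $Z=e^{ic\theta}K(r)$. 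This is cleaner in that it avoids the iteration/limit bookkeeping of $(\ref{iterating conclusion})$ and the somewhat delicate claim $(\ref{positive derivative})$, and it proves strictly more: you obtain $\Theta(r,\theta)=c\theta+\bar\Theta(r)$ at the same time, which shortcuts part of the subsequent proof of Theorem \ref{theo:expression functions}. The price is a handful of regularity checks you correctly flag — $\Phi$ and $\gamma$ admit local $C^1$ lifts because $e^{i\Phi}$ and $\sqrt{h}e^{i\gamma}$ are $C^1$ and nonvanishing (using $f,g,h\in C^1$ and $gh-f^2>0$ from $(\ref{inequality fgh})$), and the limit $r_1\to 0$ is justified by continuity of $Z$ up to the origin rather than integrability of $\sqrt{g}$. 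Your closing argument for $\mathcal{R}'>0$ coincides in substance with the paper's use of $(\ref{eq:scalar product})$ and the sign of $f$.
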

\bigskip

\begin{proof}[Proof of Proposition \ref{prop:isotrope R}]
Let us first prove that $R(r,\theta)=0$ if and only if $r=0$.
\par The surjectivity of $R$ implies that there exists $(r_0,\theta_0)$ such that $R(r_0,\theta_0)=0$. If $r_0 \neq 0$ then by Lemma \ref{lem:hyperbolic system} $R$ and $\Theta$ satisfy $(\ref{eq:scalar product})$ at $(r_0,\theta_0)$ and thus $f(r_0) =0$. This contradicts the fact that $f$ does not vanish. Therefore $r_0=0$ and since $R(0,\cdot)$ is constant we get that
\begin{equation}\label{null R0}
\forall \theta \in\T, \quad R(0,\theta) =0.
\end{equation}
Recall that $R$ is positive and it follows
\begin{equation}\label{positive derivative}
\exists r_0 >0,\:\forall r \in (0,r_0],\:\forall \theta \in \T,\quad \partial_r R(r,\theta) \geq 0.
\end{equation}

\par We now turn to the study of local extrema of $R(r,\cdot)$. For a fixed $r>0$, if $\phi$ is a local extremum of $R(r,\cdot)$ then $\partial_\theta R(r,\phi) =0$ and also, thanks to $(\ref{eq:hyperbolic})$, $\alpha(r) \partial_r R (r,\phi) = \beta(r)R \partial_r \Theta (r,\phi)$. Plugging these equalities inside the equation satisfied by $R\partial_\theta \Theta$ in $(\ref{eq:hyperbolic})$ and since $\beta(r)\neq 0$ we get
$$\pa{R\partial_\theta \Theta}(r,\phi) = \frac{\alpha^2(r) + \beta^2(r)}{\beta(r)} \partial_r R (r,\phi).$$
Then we can apply the constraint on the angular derivatives $(\ref{eq:norm partial theta})$ to conclude
\begin{equation}\label{derivative extrema}
\forall (r,\phi) \:\mbox{such that}\:\partial_\theta R(r,\phi) =0,\quad  \pa{\partial_r R (r,\phi)}^2 = \frac{\beta^2(r)}{(\alpha^2(r)+\beta^2(r))^2} h(r).
\end{equation}

\bigskip
Finally, for any $r>0$, $R(r,\cdot)$ is $2\pi$-periodic and thus has a global maximum and a global minimum on $\T$. Let us choose $r_1$ in $(0,r_0)$ where $r_0$ has been defined in $(\ref{positive derivative})$. Functions $R$ and $\Theta$ are $C^2$ in $\R^{+*}\times \T$ and for any $\theta$, the determinant of the Jacobian of $(R,\theta)$ is equal to $f(r_1)/R(r_1)$, by $(\ref{eq:scalar product})$. This determinant does not vanish so by the local inverse function theorem there exist $[r_1-\delta_1,r_1+\delta_1]\subset [0,r_0]$ and two functions $\func{\phi_m,\:\phi_M}{[r_1-\delta_1 ,r_1+\delta_1]}{\T}$ that are $C^1$ and such that
$$\forall r>0, \quad R (r,\phi_m(r)) = \min\limits_{\theta \in [-\pi,\pi]} R(r,\theta) \quad\mbox{and}\quad R (r,\phi_M(r)) = \max\limits_{\theta \in [-\pi,\pi]} R(r,\theta).$$
By definition, for all $r$ in $[r_1-\delta_1,r_1+\delta_1]$, $\partial_\theta R (r,\phi_{m/M}(r)) = 0$ and because $\phi_m$ and $\phi_M$ are $C^1$ this implies:
\begin{equation}\label{partial r extrema}
\forall r \in[r_1-\delta_1,r_1+\delta_1], \quad \partial_r\pa{R (r,\phi_{m/M}(r))} = \partial_r R (r,\phi_{m/M}(r)).
\end{equation}
Thanks to $(\ref{partial r extrema})$, $(\ref{derivative extrema})$ and $(\ref{positive derivative})$ we thus obtain $\partial_r\pa{R (r,\phi_{m}(r))}= \partial_r\pa{R (r,\phi_{M}(r))}$ on $[r_1-\delta_1,r_1+\delta_1]$. This implies for all $r \in[r_1-\delta_1,r_1+\delta_1]$: 
$$\min\limits_{\theta \in [-\pi,\pi]} R(r,\theta) =\max\limits_{\theta \in [-\pi,\pi]} R(r,\theta) + \cro{\min\limits_{\theta \in [-\pi,\pi]} R(r_1-\delta_1,\theta)-\max\limits_{\theta \in [-\pi,\pi]} R(r_1-\delta_1,\theta)}.$$

\par To conclude we iterate: either $r_1-\delta_1  =0$ and we define $r_2=0$ or we can start our argument again with $r_2 = r_1-\delta_1$. Iterating the process we construct a sequence $(r_n)_{n\in\N^*}$ either strictly decreasing or reaching $0$ at a certain step and such that
\begin{equation}\label{iterating conclusion}
\begin{split}
\forall r \in [r_{n+1},r_1], \: \min\limits_{\theta \in [-\pi,\pi]} R(r,\theta) =&\max\limits_{\theta \in [-\pi,\pi]} R(r,\theta) + \min\limits_{\theta \in [-\pi,\pi]} R(r_{n+1},\theta)-\max\limits_{\theta \in [-\pi,\pi]} R(r_{n+1},\theta).
\end{split}
\end{equation}
This sequence thus converges to $r_\infty\geq 0$. If $r_\infty \neq 0$ then we could start our process again at $r_\infty$ and construct another decreasing sequence still satisfying $(\ref{iterating conclusion})$. In the end we will construct a sequence converging to $0$ so without loss of generality we assume that $r_\infty=0$.
\par Hence, since $R$ is continuous on $\R^+\times \T$ and $(\ref{null R0})$ holds true, it follows by taking the limit as $n$ tends to $\infty$ in $(\ref{iterating conclusion})$: 
$$\forall r \in [0,r_1], \quad \min\limits_{\theta \in [-\pi,\pi]} R(r,\theta) =\max\limits_{\theta \in [-\pi,\pi]} R(r,\theta).$$
The equality above implies that $\theta \mapsto R(r,\theta)$ is constant for any $r\leq r_1$.

\bigskip
The rotational invariance of $R(r,\cdot)$ holds for any $r_1 <r_0$ where $r_0$ is such that $(\ref{positive derivative})$ holds true. Therefore, denoting $r_M = \sup\br{r>0: \:\forall \theta \in \T,\quad  \partial_r R(r,\theta) \geq 0}$ it follows that for any $r\leq r_M$, $\theta \mapsto R(r,\theta)$ is constant. Since $R(\cdot,\cdot)$ is $C^2$ in $\R^{+*}\times\T$ we infer
$$\forall r \in (0,r_M),\:\forall \theta \in \T, \quad \partial_\theta R(r,\theta) =0 .$$
Suppose that $r_M <+\infty$ then by continuity of $\partial_\theta R$ we get $\partial_\theta R(r_M,\theta) =0$ for all $\theta$. But the definition of $r_M$ implies the existence of $\theta_M$ such that $\partial_r R(r_M,\theta_M) =0$. It follows that
$$\partial_\theta R(r_M,\theta_M) =0 \quad\mbox{and}\quad \partial_r R(r_M,\theta_M) =0.$$
Plugging the above inside the constraint $(\ref{eq:norm partial theta})$ yields $f(r_M) =0$ which is a contradiction since $r_M>0$.
\par We thus conclude that $r_M=+\infty$ and that $\theta \mapsto R(r,\theta)$ is invariant for any $r\geq 0$.

\end{proof}


\subsection{Isotropic solutions and proof of Theorem \ref{theo:expression functions}}

\begin{proof}[Proof of Theorem \ref{theo:expression functions}]
Let  us consider $R$, $\Theta$, $f$, $g$ and $h$ as in the statement of Theorem \ref{theo:expression functions} and satisfying the system $(\ref{eq:scalar product})-(\ref{eq:norm partial r})-(\ref{eq:norm partial theta})$. Thanks to Proposition \ref{prop:isotrope R}, there exists $\func{\mathcal{R}}{\R^+}{\R^+}$
continuous on $\R^+$ and $C^2$ on $\R^{+*}$ such that $\mathcal{R}(0)=0$, $\mathcal{R}'(r) >0$ and $R(r,\theta) = \mathcal{R}(r)$ for any $(r,\theta)$ in $\R^+\times \T$.
\par First, we recall $(\ref{inequality fgh})$: $h(r)g(r) > f(r)^2$ for $r>0$ and since $f$ does not vanish it follows that $h(r)>0$ and $g(r)>0$ for any $r>0$.
\par Then, thanks to $(\ref{eq:norm partial theta})$ we infer $\abs{\partial_\theta \Theta (r,\theta)} = \sqrt{h(r)}/\mathcal{R}(r)$. The right-hand side does not vanish so neither does $\partial_\theta \Theta$. By continuity it keeps a fixed sign $s$ in $\br{-1,1}$.
$$\forall (r,\theta) \in \R^{+*}\times \T,\quad \partial_\theta \Theta (r,\theta) = s\frac{\sqrt{h(r)}}{\mathcal{R}(r)}.$$
\par We now recall that $\Theta$ maps $\R^+\times\T$ to $\T$ and since the right-hand side does not depend on $\theta$ such an equality implies that
$$\exists \eps \in \br{-1,0,1},\:\forall r>0, \quad \frac{\sqrt{h(r)}}{\mathcal{R}(r)} = \eps.$$
Recalling that $\mathcal{R}(0)>0$ we deduce that $\eps =1$ and thus
\begin{equation}\label{final R final Theta}
\forall r\in \R^+, \quad \mathcal{R}(r) = \sqrt{h(r)} \quad\mbox{and}\quad \Theta(r,\theta) = \pm \theta + \bar{\Theta}(r)
\end{equation}
where $\bar{\Theta}$ is a function from $\R^+$ to $\T$.
\par At last, we use $(\ref{eq:scalar product})$ to obtain $f(r) = \pm  h'(r)/2$ for any $r>0$. The hyperbolic system $(\ref{eq:hyperbolic})$, with $\alpha(r) =(-1)^p\frac{\sqrt{gh - f^2}}{g}(r)$ ($p$ an integer defined in Proposition \ref{prop:equivalent system})) and $\beta(r) = f(r)/g(r)$, yields
$$\forall (r,\theta) \in \R^{+*}\times \T, \quad \partial_r\Theta(r,\theta) = \frac{\alpha(r) R(r,\theta) \partial_\theta \Theta (r,\theta) -\beta(r) \partial_\theta R(r,\theta)}{(\alpha^2(r) + \beta^2(r))R(r,\theta)}$$
which implies
\begin{equation}\label{final Thetabar}
\forall r \geq 0, \quad \bar{\Theta}(r) = \pm (-1)^p \int_0^r \frac{\sqrt{g(r_*)h(r_*) - f^2(r_*)}}{h(r_*)}\:dr_* + \bar{\Theta}(0).
\end{equation}
This concludes the proof because equations $(\ref{inequality fgh})$, $(\ref{final R final Theta})$ and $(\ref{final Thetabar})$ are exactly the conditions stated in Theorem \ref{theo:expression functions}. The sufficient condition is checked by direct computations.
\end{proof}


%

%
\bibliographystyle{acm}
\bibliography{bibliography_Isotropic_deformations}


\bigskip
\signmb
\signjf

\end{document}